\theoremstyle{definition}
\newtheorem{definition}{Definition}[section]
\newtheorem{theorem}[definition]{Theorem}
\newtheorem*{theorem*}{Conjecture}
\newtheorem{proposition}[definition]{Proposition}
\newtheorem{lemma}[definition]{Lemma}
\theoremstyle{remark}
\newtheorem{remark}[definition]{Remark}
\newcounter{enumctr}
\newcommand{\N}{\mathbb{N}}
\newcommand{\R}{\mathbb{R}}
\newcommand{\C}{\mathbb{C}}
\newcommand{\id}{\hbox{id}}
\newcommand{\rT}{\mathrm {T}}
\providecommand{\keywords}[1]{\textbf{\textbf{Key words: }} #1}
\begin{document}


\title{\vspace*{-10mm}
A linearized stability theorem for nonlinear delay fractional differential equations}

\author{Hieu Trinh\footnote{\tt hieu.trinh@deakin.edu.au, \rm School of Engineering, Deakin University, Geelong, VIC 3217, Australia}
\;and\;
H.T.~Tuan\footnote{\tt httuan@math.ac.vn, \rm Institute of Mathematics, Vietnam Academy of Science and Technology, 18 Hoang Quoc Viet, 10307 Ha Noi, Viet Nam}}
\maketitle
\begin{abstract}
In this paper, we prove a theorem of linearized asymptotic stability for fractional differential
equations with a time delay. More precisely, using the method of linearization of a nonlinear equation along an orbit (Lyapunov's first method), we show that an equilibrium of a nonlinear Caputo
fractional differential equation with a time delay is asymptotically stable if its linearization at the equilibrium is asymptotically stable. Our approach based on a technique which converts the linear part of the equation into a diagonal one. Then using properties of generalized Mittag-Leffler functions, the construction of an associated Lyapunov--Perron operator and the Banach contraction mapping theorem, we obtain the desired result.
\end{abstract}
\keywords{\emph{Asymptotic stability, delay differential equations with fractional derivatives, existence and uniqueness, fractional differential equations, growth and boundedness, stability..}}

{\it 2010 Mathematics Subject Classification:} {\small 26A33, 34A08, 34A12, 34K12.}
\section{Introduction}
Recently, delay fractional differential equations (DFDEs) have received considerable attentions because they provide mathematical models of real-world  problems in which the fractional rate of change depends on the influence of their hereditary effects, see e.g., \cite{Lakshmikantham,Benchohra,Krol,Cermak,Hieu} and the references therein.
One of the simplest form of DFDEs is 
\begin{equation}\label{add_eq}
\begin{cases}
^{C}D^\alpha_{0+}x(t)=f(t,x(t),x(t-\tau)),\quad t\in [0,T],\\
x(t)=\phi(t),\quad\forall t\in[-\tau,0],
\end{cases}
\end{equation}
where $\alpha>0$ is the order of the Caputo fractional derivative $^{C}D^\alpha_{0+}$, the initial condition $\phi$ is a continuous function on the interval $[-\tau,0]$ with $\tau,T>0$ are fixed real parameters. For this equation, the first basic and important problem is to show the existence and uniqueness of solutions under some reasonable conditions. It is well known that in the case of ordinary differential equations ($\alpha$ is an integer), under some Lipschitz conditions, a delay equation has an unique local solution (see \cite[Section 2.2]{Hale}); furthermore, by using continuation property (see \cite[Section 2.3]{Hale}), one can derive global solutions as well. However, in the fractional case (non-integer $\alpha$), the problem of existence and uniqueness of (local and global) solutions is more complex because of the  \textit{fractional order} feature of the equation which implies history dependence of the solutions, hence, among others, the continuation property is not applicable. With regard to the existence of solutions to DFDEs, many results have been reported in the literature, see e.g., Abbas \cite{Abbas} and N.D. Cong and H.T. Tuan \cite{Tuan}.

Furthermore, whenever the solution exists, it is of particular important to know the asymptotic behavior of them. To the best of our knowledge, up to now, there have been only very few contributions to this problem. Y. Luo and Y. Chen \cite{Luo}, K.A. Moornani and M. Haeri \cite{Moornani} discussed on the stability of some particular types of fractional differential equations with constant delays. J. Cermak, J. Hornicek and T. Kisela  \cite{Cermak} have discussed stability and asymptotitc properties of linear fractional-order differential systems involving both delayed as well as non-delayed terms. The stability and bifurcation analysis of a generalized scalar DFDE is discussed in \cite{Bhalekar16}. The stability and performance analysis for postive fractional-order systems with time-varying delays is reported in J. Shen and J. Lam \cite{Shen}. However, the relationship between the stability of the trivial solution to a nonlinear delay fractional differential system and that of the linearized part is still an open problem.

This paper is devoted to the investigation of the asymptotic behavior for solutions near the the equilibrium of \eqref{add_eq} in the case the function $f:\R^d\times \R^d\rightarrow \R^d$ has the form
\[
f(x,y)=Ay+g(x,y).
\]
Here, $A\in \R^{d\times d}$ and the function $g$ satisfies the following conditions
\begin{itemize}
\item[(H1)] $g(0,0)=0$;
\item[(H2)] $g$ is local Lipschitz continuous in a neighborhood of the origin and $$\lim_{\varrho\to 0}\ell_g(\varrho)=0,$$
with
$$\ell_g(\varrho):=\sup_{\substack{x,y,\hat{x},\hat{y}\in B_{\R^d}(0,\varrho)\\ (x,y)\neq (\hat{x},\hat{y})}}\frac{\|g(x,y)-g(\hat{x},\hat{y})\|}{\|x-\hat{x}\|+\|y-\hat{y}\|}.$$
\end{itemize}
Namely, we  prove that the trivial solution of \eqref{add_eq} is asymptotically stable if the trivial solution of the linearized equation 
\[
\begin{cases}
^{C}D^\alpha_{0+}x(t)=Ax(t-\tau),\quad t\in (0,\infty),\\
x(t)=\phi(t),\quad t\in [-\tau,0],
\end{cases}
\] 
where $\phi:[-\tau,0]\rightarrow \R^d$ is a continuous function, is asymptotically stable.

The rest of this paper is organized as follows. In Section 2, we recall briefly a framework of delay fractional differential systems. Section 3 is devoted to the main result of this paper. In this section, we give a spectrum characterization of the asymptotic stability to nonlinear fractional differential systems.
\section{Preliminaries}\label{sec.preliminary}
This section is devoted to recalling briefly a framework of DFDEs. We first introduce some notations which are used throughout this paper. Let $\Bbb K$ be the set of all real numbers or complex numbers and $\Bbb K^d$ be the $d$-dimensional Euclidean space endowed with a norm $\|\cdot\|$. Denote by $I$ the real interval $[a,b]$ or $[a,\infty)$, let $C(I;\Bbb K^d)$ be the space of continuous functions $\xi:I\rightarrow\Bbb K^d$ with the sup norm $\|\cdot\|_\infty$, i.e.,
\[
\|\xi\|_\infty:=\sup_{t\in I}\|\xi(t)\|,\quad\forall \xi\in C(I;\Bbb K^d).
\]
Final, we denote by $B_{C([a,b];\Bbb K^d)}(0,\varrho)$  the ball centered at the origin with radius $\varrho$ in the space $C([a,b];\Bbb K^d)$ and $B_{C_\infty}(0,\varrho)$  the ball with the center at the origin and radius $\varrho$ in the space $C([a,\infty);\Bbb K^d)$.

For $\alpha>0$, $[a,b]\subset \R$ and a measurable function $x:[a,b]\rightarrow \R$  such that $\int_a^b|x(\tau)|\;d\tau<\infty$, the Riemann--Liouville integral operator of order $\alpha$ is defined by
\[
(I_{a+}^{\alpha}x)(t):=\frac{1}{\Gamma(\alpha)}\int_a^t(t-s)^{\alpha-1}x(s)\;ds,\quad t\in (a,b],
\]
where $\Gamma$ is the Gamma function. The \emph{Caputo fractional derivative} $^{C\!}D_{a+}^\alpha x$ of a function $x\in AC^m([a,b];\R)$ is defined by
\[
(^{C\!}D_{a+}^\alpha x)(t):=(I_{a+}^{m-\alpha}D^mx)(t),\quad t\in (a,b],
\]
where $AC^m([a,b];\R)$ denotes the space of real functions $x$ which has continuous derivatives up to order $m-1$ on the interval $[a,b]$ and the $({m-1})^{th}$-order derivative $x^{(m-1)}$ is absolutely continuous, $D^m=\frac{d^m}{dt^m}$ is the usual $m^{th}$-order derivative and $m:=\lceil\alpha\rceil$ is the smallest integer larger or equal to $\alpha$. The Caputo fractional derivative of a $d$-dimensional vector function $x(t)=(x_1(t),\cdots,x_d(t))^{\rT}$ is defined component-wise as
\[
(^{C\!}D_{a+}^\alpha x)(t):=(^{C\!}D_{a+}^\alpha x_1(t),\cdots,^{C\!\!}D_{a+}^\alpha x_d(t))^{\rT}.
\]
From now on, we consider only the case $\alpha\in (0,1)$. Let $\tau$ be an arbitrary positive constant, and  $\phi\in C([-r,0];\R^d)$ be a given
continuous function. Consider the delay Caputo fractional differential equations
\begin{equation}\label{IntroEq}
^{C\!}D_{0+}^\alpha x(t)=Ax(t-\tau)+g(x(t),x(t-\tau)),\quad t\in [0,\infty),
\end{equation}
with the initial condition 
\begin{equation}\label{Ini_Cond}
x(t)=\phi(t),\quad \forall t\in[-\tau,0],
\end{equation} 
where $x\in \R^d$, $A\in \R^{d\times d}$ and $g:\R^d\times\R^d \rightarrow \R^d$ is local Lipschitz continuous in a neighborhood of the origin.

For any $T>0$, a function $\varphi(\cdot,\phi)\in C([-\tau,T];\R^d)$ is called a \emph{solution} of the initial condition problem \eqref{IntroEq}--\eqref{Ini_Cond} over the interval $[-\tau,T]$ if 
\begin{equation*}
\begin{cases}
^{C\!}D^\alpha_{0+}\varphi(t,\phi)=Ax(t-\tau)+g(\varphi(t,\phi),\varphi(t-\tau,\phi)),\quad \forall t\in (0,T],\\
\varphi(t,\phi)=\phi(t),\quad \forall t\in[-\tau,0].
\end{cases}
\end{equation*}
Since $g$ is local Lipschitz continuous in a neighborhood of the origin, \cite[Theorem 3.1]{Tuan} implies the  existence and uniqueness of solutions to the initial value problem \eqref{IntroEq}--\eqref{Ini_Cond} for any $\phi\in C([-\tau,0];\R^d)$. Let $I:=[-\tau,t_{\max}(\phi))$, where $0<t_{\max}(\phi)\leq \infty$, be the maximal interval of existence to the solution $\varphi(\cdot,\phi)$. We now recall the nations of stability and asymptotic stability of the trivial solution to the equation \eqref{IntroEq}.
\begin{definition}
\begin{itemize}
\item [(i)] The trivial solution of \eqref{IntroEq} is called stable if for any $\varepsilon>0$ there exists $\delta=\delta(\varepsilon)>0$ such that for any $\|\phi\|_\infty\leq \delta$, we have $t_{\max}(\phi)=\infty$ and
\[
\|\varphi(t,\phi)\|\leq \varepsilon,\quad \forall t\geq 0.
\]
\item[(ii)] The trivial solution is called asymptotic stable if it is stable and there exists $\hat{\delta}>0$ such that $\lim_{t\to\infty}\varphi(t,\phi)=0$ whenever $\|\phi\|_\infty\leq \hat{\delta}$.
\end{itemize}
\end{definition}
In the case $g=0$, the equation \eqref{IntroEq} reduces to a linear delay fractional equation 
\begin{equation}\label{linearEq}
^{C}D^\alpha_{0+} x(t)=Ax(t-\tau),\quad \forall t\geq 0.
\end{equation}
Let
\[
\mathcal{S}_{\alpha,\tau}:=\left\{\lambda\in\C\setminus\{0\}:|\lambda|<\left(\frac{|\arg{(\lambda)}|-\alpha\pi/2}{\tau}\right)^{\alpha}, \frac{\alpha \pi}{2}<|\arg{(\lambda)}|\leq \pi\right\}.
\]
In the following theorem, we restate a spectral characterization on the asymptotic stability of the trivial solution to \eqref{linearEq}.
\begin{theorem}
The trivial solution of \eqref{linearEq} is asymptotic stable if and only if all eigenvalues of the matrix $A$ are located in the domain $\mathcal{S}_{\alpha,\tau}$, i.e.,
\[
\sigma(A)\subset \mathcal{S}_{\alpha,\tau}.
\]
\end{theorem}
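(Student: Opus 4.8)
The criterion restates a known result; the line of proof I would follow runs as follows.

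\emph{Reduction to a scalar equation.} A linear change of variables $x\mapsto P^{-1}x$ turns \eqref{linearEq} into an equation of the same form with $A$ replaced by $P^{-1}AP$, and it affects neither stability nor asymptotic stability, so we may assume $A$ is in complex Jordan canonical form. The system then decouples into Jordan blocks, and for a block $J=\lambda I+N$ (with $N$ nilpotent) the resolvent is
\[
(s^\alpha I-e^{-s\tau}J)^{-1}=\sum_{k\ge 0}\frac{e^{-ks\tau}}{Q_\lambda(s)^{k+1}}\,N^k,\qquad Q_\lambda(s):=s^\alpha-\lambda e^{-s\tau},
\]
with $s^\alpha$ the principal branch, holomorphic on $\C\setminus(-\infty,0]$. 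Hence, via the Laplace transform, the asymptotics of a block are governed by those of the scalar equation $^{C}D^\alpha_{0+}\xi(t)=\lambda\,\xi(t-\tau)$, whose solution with history $\phi$ obeys $\xi(t)=\tfrac{1}{2\pi i}\int_{c-i\infty}^{c+i\infty}e^{st}\Xi(s)\,ds$ (Bromwich, $c$ to the right of all singularities), where
\[
\Xi(s):=\frac{s^{\alpha-1}\phi(0)+\lambda e^{-s\tau}\int_{-\tau}^{0}e^{-su}\phi(u)\,du}{Q_\lambda(s)}.
\]
It therefore suffices to prove the criterion for one scalar equation, the case $\lambda=0$ being immediate since then $\xi\equiv\phi(0)$ on $[0,\infty)$.

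\emph{Sufficiency.} Assume $\lambda\in\mathcal{S}_{\alpha,\tau}$. I would first show $Q_\lambda$ has no zero in $\{\Re s\ge 0\}$: if $Q_\lambda(s)=0$ with $s=\rho e^{i\psi}$, $\rho>0$, $|\psi|\le\pi/2$, comparing moduli gives $\rho^\alpha=|\lambda|e^{-\tau\rho\cos\psi}\le|\lambda|$, while comparing arguments --- where the boundedness of $\mathcal{S}_{\alpha,\tau}$ by $\left(\tfrac{\pi(2-\alpha)}{2\tau}\right)^\alpha$ rules out the non-principal determinations of $\alpha\psi\equiv\arg\lambda-\tau\rho\sin\psi\pmod{2\pi}$ and conjugate symmetry reduces to $\arg\lambda>0$ --- forces $\tau\rho|\sin\psi|\ge|\arg\lambda|-\tfrac{\alpha\pi}{2}>0$, hence $\rho^\alpha\ge\left(\tfrac{|\arg\lambda|-\alpha\pi/2}{\tau}\right)^\alpha$; the two inequalities give $|\lambda|\ge\left(\tfrac{|\arg\lambda|-\alpha\pi/2}{\tau}\right)^\alpha$, contradicting $\lambda\in\mathcal{S}_{\alpha,\tau}$. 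Next, a zero with $\Re s\ge-1$ satisfies $|s|^\alpha=|\lambda|e^{-\tau\Re s}\le|\lambda|e^{\tau}$, so zeros of large modulus have $\Re s\to-\infty$; since $Q_\lambda$ is holomorphic and nonvanishing near $s=0$, the zeros cannot accumulate in any half-plane $\{\Re s\ge-\beta_0\}$, so with the previous step there is $\beta>0$ with $Q_\lambda\ne 0$ on $\{\Re s\ge-\beta\}$. Finally, deform the Bromwich line onto a Hankel-type contour around the cut $(-\infty,0]$: the residues at the finitely many (left half-plane) poles contribute terms bounded by $C\|\phi\|_\infty e^{-\beta t}$, and the loop integral around the cut, split into a piece near $0$ (where $s^{\alpha-1}$ is integrable, giving $O(t^{-\alpha})$ by a Watson's-lemma estimate) and a piece bounded away from $0$ (exponentially small), is bounded by $C\|\phi\|_\infty t^{-\alpha}$ for large $t$, with $C$ uniform; together with boundedness on compact time intervals this yields $\|\varphi(t,\phi)\|\le C\|\phi\|_\infty$ for all $t\ge0$ and $\varphi(t,\phi)\to 0$, i.e., asymptotic stability. (For Jordan blocks the same deformation applies to the kernels $e^{-ks\tau}/Q_\lambda(s)^{k+1}$, the higher-order poles only producing extra polynomial factors multiplying the decaying exponentials.)

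\emph{Necessity.} Suppose $\lambda\notin\mathcal{S}_{\alpha,\tau}$. If $\lambda\in\partial\mathcal{S}_{\alpha,\tau}$, running the modulus/argument computation at equality produces a purely imaginary zero $s=i\omega$ of $Q_\lambda$, and for a nonzero constant history $\phi$ the corresponding residue of $e^{st}\Xi(s)$ is nonzero, so $\varphi(\cdot,\phi)$ carries a persistent oscillation and does not tend to $0$. If $\lambda$ lies in the (connected) exterior $\C\setminus\overline{\mathcal{S}_{\alpha,\tau}}$, then the number of zeros of $Q_\lambda$ in $\{\Re s>0\}$ --- finite, since such zeros satisfy $|s|\le|\lambda|^{1/\alpha}$ --- is locally constant there (zeros can cross the imaginary axis only across $\partial\mathcal{S}_{\alpha,\tau}$, and the crossing is transversal by implicit differentiation of $Q_\lambda(s)=0$), and it is $\ge1$ at a large positive real $\lambda$ (where $s\mapsto s^\alpha-\lambda e^{-s\tau}$ has a positive real root); hence $Q_\lambda$ has a zero $s_0$ with $\Re s_0>0$, and for suitable $\phi$ the inversion formula then contains a term $\sim e^{s_0 t}$, so $\varphi(\cdot,\phi)$ is unbounded for arbitrarily small histories. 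In either case the trivial solution of the scalar equation, hence of \eqref{linearEq} (pick an eigenvalue $\lambda\in\sigma(A)\setminus\mathcal{S}_{\alpha,\tau}$ and lift the destabilizing solution through the Jordan decomposition), fails to be asymptotically stable.

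\emph{Main obstacle.} The two delicate points are: (i) the contour deformation together with the decay estimate for the loop around the branch point $s=0$ --- one must show this contribution actually tends to $0$ at rate $t^{-\alpha}$, not merely that it stays bounded, which requires splitting the Hankel contour and a Watson's-lemma-type asymptotic, carried out for repeated eigenvalues with the kernels $e^{-ks\tau}/Q_\lambda(s)^{k+1}$; and (ii) the topological bookkeeping ensuring that the zero-crossing set is exactly the explicit curve defining $\mathcal{S}_{\alpha,\tau}$, that no zeros escape to or arrive from infinity (ruled out by $|s|^\alpha=|\lambda|e^{-\tau\Re s}$), and that the right half-plane zero count is genuinely $0$ on $\mathcal{S}_{\alpha,\tau}$ and positive on its complement, so that the ``if'' and the ``only if'' match precisely. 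The direct modulus/argument computation is what makes the first half clean and, by locating the crossing curve, feeds the second.
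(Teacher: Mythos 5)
The paper offers no internal proof of this theorem: it is quoted from \cite[Theorem 2]{Cermak}, so the only comparison available is with the strategy of that reference, which your sketch indeed parallels (Laplace transform, location of the zeros of $Q_\lambda(s)=s^\alpha-\lambda e^{-s\tau}$, contour deformation). Within that strategy your reduction to scalar blocks and your modulus/argument computation showing that $Q_\lambda$ has no zero with $\Re s\ge 0$ when $\lambda\in\mathcal{S}_{\alpha,\tau}$ (including the exclusion of non-principal determinations of the argument) are sound. The first genuine gap is the contour deformation in the sufficiency part. $Q_\lambda$ has \emph{infinitely many} zeros in the open left half-plane: solving $|s|^\alpha=|\lambda|e^{-\tau\Re s}$ together with $\alpha\arg s+\tau\Im s\equiv\arg\lambda\ (\mathrm{mod}\ 2\pi)$ for large $|\Im s|$ produces roots with $\Im s\approx 2k\pi/\tau$ and $\Re s\approx-(\alpha/\tau)\ln|\Im s|\to-\infty$, and none of these lie on the cut $(-\infty,0]$, so \emph{all} of them are swept up when the Bromwich line is pushed onto a Hankel contour hugging the cut. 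Your phrase ``residues at the finitely many (left half-plane) poles'' is therefore false; the residue sum is an infinite series whose terms decay only like $e^{t\Re s_k}$ with $\Re s_k\to-\infty$ logarithmically in $k$, so it converges only for large $t$ (roughly $t>\tau/\alpha$) and does not by itself yield the uniform bound $\|\varphi(t,\phi)\|\le C\|\phi\|_\infty$ needed for stability. The standard repair, used in the cited reference and reproduced in this paper's appendix (proof of Lemma \ref{estGML}), is to deform only onto a contour $\gamma(\mu,\tfrac{\pi}{2}+\delta)$ whose rays make angle $\tfrac{\pi}{2}+\delta$ with the positive real axis, with $\delta>0$ chosen so that only finitely many characteristic roots satisfy $|\arg z|\le\tfrac{\pi}{2}+\delta$, all of them with negative real part; then finitely many exponentially decaying residues plus an $O(t^{-\alpha})$ contribution from the contour give the decay and the uniform bound.

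The second gap is in the necessity half. The locus of $\lambda$ for which $Q_\lambda$ has a purely imaginary zero is the whole curve $\{\omega^\alpha e^{\pm i(\alpha\pi/2+\omega\tau)}:\omega>0\}$, of which $\partial\mathcal{S}_{\alpha,\tau}$ is only the first arc (the portion with $\omega\tau\le\pi-\alpha\pi/2$); the later arcs satisfy $|\lambda|=\omega^\alpha>\left((\pi-\alpha\pi/2)/\tau\right)^\alpha$ and hence lie in the exterior of $\overline{\mathcal{S}_{\alpha,\tau}}$. Consequently the number of zeros in $\{\Re s>0\}$ is \emph{not} locally constant on that connected exterior, and your homotopy to a large positive real $\lambda$ does not by itself show the count remains $\ge 1$ everywhere outside $\overline{\mathcal{S}_{\alpha,\tau}}$: you must additionally show that at each crossing arc roots move into, and never out of, the right half-plane as $|\lambda|$ increases (a signed transversality computation along $Q_\lambda(s)=0$), or instead count the unstable roots directly via the argument principle as is done in \cite{Cermak}. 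With these two repairs your outline becomes a correct proof; as written, both the boundedness estimate in the sufficiency part and the zero-count bookkeeping in the necessity part have holes.
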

\begin{proof}
See \cite[Theorem 2]{Cermak}.
\end{proof}
For the nonlinear equation \eqref{IntroEq}, we first focus on the case: the matrix $A$ is diagonal and the function $g$ is global Lispchitz continuous. By using the generalized Mittag-Leffler function $E^{\lambda,\tau}_{\alpha,\beta}(t):[0,\infty)\rightarrow \C$, which is defined by
\[
E^{\lambda,\tau}_{\alpha,\beta}(t):=\begin{cases}
\sum_{k=0}^{\infty}\frac{\lambda^k (t-k\tau)^{\alpha k+\beta-1}}{\Gamma(\alpha k+\beta)}H(t-k\tau),\quad\text{if}\;\; t\geq 0,\\
1,\quad \text{if}\;\; t=0,
\end{cases}
\]
where $\beta\in R$, $\lambda\in \C$ and $H$ is the Heaviside function defined by
\[
H(t)=\begin{cases}
1,\quad \text{if}\;\; t\geq 0,\\
0,\quad \text{if}\;\; t<0,
\end{cases}
\]
we obtain a connection between the solutions of the equation \eqref{IntroEq} and its linear part as below.
\begin{lemma}\label{Equivalent_eq}
Consider the initial problem \eqref{IntroEq}--\eqref{Ini_Cond}. Assume that $g$ is global Lipschitz continuous and $$A=\hbox{diag}(\lambda_1,\dots,\lambda_d),$$
where $\lambda_i\in \C$, for $i=1,\dots,d$. Then, for any initial condition $\phi\in C([-\tau,0];\C^d)$, this problem has a unique solution on $[-\tau,\infty)$. Denote this solution by $\varphi(\cdot,\phi)$. We have a representation of $\varphi(\cdot,\phi)$ as $\varphi(\cdot,\phi):=(\varphi^1(\cdot,\phi),\dots,\varphi^d(\cdot,\phi))^{\rm T}$, in which, for $i\in \{1,\dots, d\}$,
\begin{align}
\notag \varphi^i(t,\phi):=& E^{\lambda_i,\tau}_{\alpha,1}(t)\phi^i(0)+\lambda_i \int_{-\tau}^0 E^{\lambda_i,\tau}_{\alpha,\alpha}(t-\tau-s)H(t-\tau-s)\phi^i(s)ds\\
\label{invariant_for}&+\int_0^t E^{\lambda_i,\tau}_{\alpha,\alpha}(t-s)g^i(\varphi(s,\phi),\varphi(s-\tau,\phi))ds,\quad \forall t\geq 0,
\end{align} 
and $\varphi(t,\phi)=\phi(t)$ for all $t\in [-\tau,0]$. 
\end{lemma}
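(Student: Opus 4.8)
The plan is to reduce the $d$-dimensional nonlinear system to a family of scalar \emph{linear} inhomogeneous delay fractional equations, and then to verify the variation-of-constants formula \eqref{invariant_for} for that scalar linear case. I would begin with global existence and uniqueness. On $[0,\tau]$ the problem \eqref{IntroEq}--\eqref{Ini_Cond} reads $^{C\!}D^\alpha_{0+}x(t)=A\phi(t-\tau)+g(x(t),\phi(t-\tau))$, which is a \emph{non-delayed} Caputo fractional ODE whose right-hand side is continuous in $t$ and globally Lipschitz in $x$ uniformly in $t$; by the standard global existence theorem for such equations (or by \cite[Theorem 3.1]{Tuan} together with the a priori linear-growth bound furnished by the global Lipschitz hypothesis on $g$) it has a unique solution on all of $[0,\tau]$. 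Iterating on $[\tau,2\tau],[2\tau,3\tau],\dots$ and concatenating produces a unique solution $\varphi(\cdot,\phi)$ on $[-\tau,\infty)$, so in particular $t_{\max}(\phi)=\infty$.

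Next I would reduce to the scalar linear problem. Fix $i\in\{1,\dots,d\}$ and set $h^i(t):=g^i(\varphi(t,\phi),\varphi(t-\tau,\phi))$, which is a fixed continuous function on $[0,\infty)$ once $\varphi$ is known. Because $A=\mathrm{diag}(\lambda_1,\dots,\lambda_d)$, the component $\varphi^i$ is the unique solution of the scalar problem
\[
^{C\!}D^\alpha_{0+}x(t)=\lambda_i x(t-\tau)+h^i(t)\ \ (t>0),\qquad x|_{[-\tau,0]}=\phi^i ,
\]
equivalently of the Volterra equation $x(t)=\phi^i(0)+I^\alpha_{0+}\big[\lambda_i x(\cdot-\tau)+h^i\big](t)$ (again uniquely solvable by the same step argument, each step now being a linear fractional ODE). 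Hence it suffices to show that the right-hand side of \eqref{invariant_for}, call it $\psi_i$, solves this Volterra equation; uniqueness then identifies $\psi_i=\varphi^i$.

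To verify \eqref{invariant_for} I would first note the matching at $t=0$: one has $E^{\lambda_i,\tau}_{\alpha,1}(0)=1$, the history integral vanishes because $H(-\tau-s)=0$ for $s\in(-\tau,0)$, and the Duhamel integral is empty, so $\psi_i(0)=\phi^i(0)$. For $t>0$ I would substitute $\psi_i$ into the Volterra equation and compute $I^\alpha_{0+}[\psi_i(\cdot-\tau)]$ by manipulating the defining series of $E^{\lambda_i,\tau}_{\alpha,1}$ and $E^{\lambda_i,\tau}_{\alpha,\alpha}$ term by term; this is legitimate since for each fixed $t$ only finitely many summands survive (the factor $H(t-k\tau)$ kills all $k>t/\tau$) and each surviving term is locally integrable. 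The only analytic input needed is the shifted Beta-function identity
\[
I^\alpha_{0+}\!\left[\frac{(\cdot-k\tau)^{\alpha k+\beta-1}}{\Gamma(\alpha k+\beta)}\,H(\cdot-k\tau)\right]\!(t)=\frac{(t-k\tau)^{\alpha k+\beta+\alpha-1}}{\Gamma(\alpha(k+1)+\beta)}\,H(t-k\tau),
\]
which makes the two Mittag-Leffler series telescope: applying $I^\alpha_{0+}$ to the $k$-th term of a generalized Mittag-Leffler function turns it into a shift of its $(k+1)$-st term, and the factor $\lambda_i$ and the argument shift $t\mapsto t-\tau$ carried by the delayed term realign the base $(t-k\tau)$ of each power. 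Splitting $I^\alpha_{0+}[\psi_i(\cdot-\tau)]$ according to whether the argument lies in $[-\tau,0]$ (where $\psi_i=\phi^i$, which regenerates the $E^{\lambda_i,\tau}_{\alpha,1}(t)\phi^i(0)$ and history terms) or in $[0,\infty)$ (where $\psi_i$ is again given by \eqref{invariant_for}, reproducing itself up to the Duhamel correction $I^\alpha_{0+}[h^i]$) then yields the Volterra identity. Alternatively one can bypass the resummation and run the method of steps directly: solve the scalar Volterra equation explicitly on $[0,\tau]$, then on $[\tau,2\tau]$, and prove by induction on $k$ that the step-$k$ solution agrees with $\psi_i$ on $[(k-1)\tau,k\tau]$, the inductive step again being the identity above.

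The hard part will be the bookkeeping in this last step: one must track, through the Riemann--Liouville integral, how the delayed feedback $\lambda_i x(\cdot-\tau)$ propagates the index and base shifts in the Mittag-Leffler series, and in particular confirm that $E^{\lambda_i,\tau}_{\alpha,1}(t)\phi^i(0)$ together with $\lambda_i\int_{-\tau}^0 E^{\lambda_i,\tau}_{\alpha,\alpha}(t-\tau-s)H(t-\tau-s)\phi^i(s)\,ds$ is precisely what the history $\phi^i(t-\tau)$, $t\in(0,\tau)$, generates under $^{C\!}D^\alpha_{0+}$. This is the point where the definitions of $E^{\lambda,\tau}_{\alpha,1}$ and $E^{\lambda,\tau}_{\alpha,\alpha}$ are tailor-made, and lining up the Heaviside indices with the Gamma-argument shifts is the only genuinely delicate computation; the global existence by steps, the reduction from $\R^d$ to one component, and the term-by-term manipulation of the locally finite series are all routine.
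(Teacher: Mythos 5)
Your argument is correct, but it takes a genuinely different route from the paper. The paper's proof is essentially a citation argument: existence and uniqueness come from \cite[Corollary 3.2]{Tuan}, exponential boundedness of solutions from \cite[Theorem 4.1]{Tuan}, and the representation \eqref{invariant_for} is then obtained by taking Laplace transforms and repeating the computations of \cite[Section 4]{Cermak} (the exponential bound being what licenses the Laplace transform in the first place). You instead work entirely in the time domain: existence and uniqueness by the method of steps on the equivalent Volterra equation, reduction to scalar linear inhomogeneous delay equations by freezing the nonlinearity along the solution, and direct verification of \eqref{invariant_for} by applying $I^{\alpha}_{0+}$ term by term to the locally finite series defining $E^{\lambda,\tau}_{\alpha,1}$ and $E^{\lambda,\tau}_{\alpha,\alpha}$, with the shifted Beta-function identity producing exactly the telescoping you describe (one can check, e.g., that $\lambda I^{\alpha}_{0+}$ applied to the $k$-th summand regenerates the $(k+1)$-st, and that the Duhamel term reproduces itself up to $I^{\alpha}_{0+}h^i$ after an interchange of integration order). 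What your route buys is self-containedness: it needs neither the exponential-boundedness theorem nor Laplace-transform inversion, only uniqueness for the Volterra equation plus the power-function integral identity; what the paper's route buys is brevity and alignment with the machinery of \cite{Cermak} that is reused later in the stability analysis. One point to state carefully if you write this out: on the intervals $[k\tau,(k+1)\tau]$ with $k\geq 1$ the Caputo derivative still has lower terminal $0$, so each ``step'' is not a fresh fractional ODE but a Volterra equation carrying a known memory tail from the previous intervals (this is exactly why the paper warns that the classical continuation property fails); since you phrase the step argument through the Volterra form and the global Lipschitz hypothesis gives a contraction (or a weighted-norm contraction as in \cite{Tuan}) on each step, the argument goes through, but the phrase ``each step now being a linear fractional ODE'' should be amended accordingly.
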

\begin{proof}
From \cite[Corollary 3.2]{Tuan}, we see that the initial problem \eqref{IntroEq}--\eqref{Ini_Cond} has a unique solution with any initial condition $\phi\in C([-\tau,0];\C^d)$. On the other hand, due to \cite[Theorem 4.1]{Tuan}, all solutions of this problem are exponential bounded. Using Laplace transform and arguments as in \cite[Section 4]{Cermak}, we obtain the variation of constants formula \eqref{invariant_for}.
\end{proof}
In the remaining part of this section, we give some estimates involving the scalar generalized Mittag-Leffler functions $E^{\lambda,\tau}_{\alpha,\beta}$ with $\lambda\in\mathcal{S}_{\alpha,\tau}$ and $\beta=1$ or $\beta=\alpha$.
\begin{lemma}\label{estGML}
Assume that $\lambda \in \mathcal{S}_{\alpha,\tau}$. Then, there exits a positive constant $C_{\alpha,\lambda}$ such that the following statement hold:
\begin{itemize}
\item[(i)] $|E_{\alpha,\alpha}^{\lambda,\tau}(t)|\leq \frac{C_{\alpha,\lambda}}{t^{\alpha+1}}$,\quad $\forall t\geq 1$;
\item[(ii)] $|E_{\alpha,1}^{\lambda,\tau}(t)|\leq \frac{C_{\alpha,\lambda}}{t^\alpha}$,\quad $\forall t\geq 1$;
\item[(iii)] $
\sup_{t\geq 0}\int_0^t |E^{\lambda,\tau}_{\alpha,\alpha}(s)|\;ds\leq C_{\alpha,\lambda}.$
\end{itemize}
\end{lemma}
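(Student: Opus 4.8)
The plan is to read all three estimates off the Laplace transform of $E^{\lambda,\tau}_{\alpha,\beta}$. Transforming the defining series term by term — the summand $\frac{\lambda^{k}(t-k\tau)^{\alpha k+\beta-1}}{\Gamma(\alpha k+\beta)}H(t-k\tau)$ has Laplace transform $\lambda^{k}e^{-k\tau z}z^{-(\alpha k+\beta)}$, and for $\mathrm{Re}\,z$ large the resulting geometric series in $\lambda e^{-\tau z}z^{-\alpha}$ converges absolutely — gives
\[
\big(\mathcal{L}E^{\lambda,\tau}_{\alpha,\beta}\big)(z)=\frac{z^{\alpha-\beta}}{z^{\alpha}-\lambda e^{-\tau z}},\qquad \mathrm{Re}\,z\gg 1,
\]
where $z^{\alpha}$ is the principal branch, holomorphic on $\mathbb{C}\setminus(-\infty,0]$, so that $E^{\lambda,\tau}_{\alpha,\beta}(t)=\frac{1}{2\pi i}\int_{c-i\infty}^{c+i\infty}e^{zt}\frac{z^{\alpha-\beta}}{z^{\alpha}-\lambda e^{-\tau z}}\,dz$ for $c>0$ large. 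I would then deform this Bromwich line, to the left, onto a Hankel contour $\gamma$ that comes in from $-\infty$ below the cut, encircles the origin on a small circle, and returns to $-\infty$ above the cut. This is where the hypothesis $\lambda\in\mathcal{S}_{\alpha,\tau}$ enters: by the analysis underlying the spectral characterization of Section~\ref{sec.preliminary} (cf.\ \cite[Section~4]{Cermak}), all zeros of the characteristic function $z\mapsto z^{\alpha}-\lambda e^{-\tau z}$ lie in the open left half-plane and run off to infinity along the curve $\mathrm{Re}\,z\sim-\tfrac{\alpha}{\tau}\ln|\mathrm{Im}\,z|$, so that only finitely many lie to the right of any vertical line; also $z=0$ is not a zero, since $\lambda\neq 0$, which is what lets the contour be wrapped around the branch point.

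Carrying out the deformation yields
\[
E^{\lambda,\tau}_{\alpha,\beta}(t)=\sum_{j}e^{z_{j}t}\,\frac{z_{j}^{\,1-\beta}}{\alpha+\tau z_{j}}+\frac{1}{2\pi i}\int_{\gamma}e^{zt}\,\frac{z^{\alpha-\beta}}{z^{\alpha}-\lambda e^{-\tau z}}\,dz ,
\]
the residue coefficients coming from $\lambda e^{-\tau z_{j}}=z_{j}^{\alpha}$. Each term $e^{z_{j}t}(\cdots)$ decays exponentially because $\mathrm{Re}\,z_{j}<0$; on $\gamma$, the part away from the origin is exponentially small, the denominator there having size $|\lambda|e^{\tau|z|}$. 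So the dominant term comes from the neighbourhood of the branch point: expanding $\frac{z^{\alpha-\beta}}{z^{\alpha}-\lambda e^{-\tau z}}=\frac{z^{\alpha-\beta}}{-\lambda}(1+O(z)+O(z^{\alpha}))$ near $z=0$ and invoking the Hankel integral $\frac{1}{2\pi i}\int e^{zt}z^{-s}\,dz=\frac{t^{s-1}}{\Gamma(s)}$, the leading term is $-\frac{t^{-\alpha}}{\lambda\Gamma(1-\alpha)}$ for $\beta=1$ and $-\frac{t^{-\alpha-1}}{\lambda\Gamma(-\alpha)}$ for $\beta=\alpha$, the rest being of strictly smaller order — exactly the analogue of the classical asymptotics $E_{\alpha,\beta}(-x)\sim x^{-1}/\Gamma(\beta-\alpha)$. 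Thus $E^{\lambda,\tau}_{\alpha,1}(t)=O(t^{-\alpha})$ and $E^{\lambda,\tau}_{\alpha,\alpha}(t)=O(t^{-\alpha-1})$ as $t\to\infty$; since the defining series converges locally uniformly, $E^{\lambda,\tau}_{\alpha,\beta}$ is bounded on each compact subinterval of $[1,\infty)$, and combining this with the asymptotics upgrades them to the uniform bounds (i), (ii) for all $t\ge1$, with $C_{\alpha,\lambda}$ depending also on the fixed $\tau$. Estimate (iii) then costs almost nothing: $\int_{0}^{t}=\int_{0}^{1}+\int_{1}^{t}$, where $\int_{1}^{t}|E^{\lambda,\tau}_{\alpha,\alpha}|\le\int_{1}^{\infty}C_{\alpha,\lambda}s^{-\alpha-1}\,ds<\infty$ by (i), while on $[0,1]$ one has $|E^{\lambda,\tau}_{\alpha,\alpha}(s)|\le\frac{s^{\alpha-1}}{\Gamma(\alpha)}$ for $0\le s<\tau$ plus a bounded finite sum, and $s^{\alpha-1}$ is integrable near $0$ because $\alpha>0$.

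I expect the genuine work to sit in the second paragraph: making the contour deformation rigorous despite the factor $e^{-\tau z}$ — which \emph{grows} along the negative real axis, so the Hankel-integrand estimates near and far from the branch point both need attention — and, more delicately, controlling the sum over the infinitely many characteristic roots, which converges only conditionally in general, well enough to pull out the stated algebraic rates uniformly for $t\ge1$ rather than merely "$E^{\lambda,\tau}_{\alpha,\beta}(t)\to0$". The remaining ingredients (the termwise transform, the expansion at the origin, part (iii)) are routine.
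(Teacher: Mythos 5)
Your overall strategy---termwise Laplace transform, contour deformation, residues at the characteristic roots, and a branch-point expansion that produces the rates $t^{-\alpha}$ (for $\beta=1$) and $t^{-\alpha-1}$ (for $\beta=\alpha$) through the constants $1/\Gamma(\beta-\alpha)$, $1/\Gamma(\beta-2\alpha)$ with the cancellation $1/\Gamma(0)=0$---is the same in spirit as the paper's, which starts from the contour representation quoted from \cite[p.~116]{Cermak} and expands the integrand near the origin; your treatment of (iii) is essentially identical to the paper's. The gap is in the choice of contour. Wrapping the Bromwich line onto a Hankel loop hugging the cut $(-\infty,0]$ forces you to cross \emph{all} of the infinitely many characteristic roots: they satisfy $\mathrm{Re}\,z_j\approx-\tfrac{\alpha}{\tau}\ln|\mathrm{Im}\,z_j|\to-\infty$ with $|\arg z_j|\to\pi/2$, so every one of them lies between the negative real axis and the Bromwich line. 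The resulting residue series $\sum_j e^{z_j t}\,z_j^{1-\beta}/(\alpha+\tau z_j)$ has terms of size roughly $|z_j|^{-\beta-\alpha t/\tau}$ with $|z_j|$ growing linearly in $j$, so it is not absolutely convergent uniformly down to $t=1$ (for large $\tau$ and $t$ near $1$ the exponent can drop below $1$); ``each term decays exponentially'' therefore does not yield the claimed uniform algebraic bound on $[1,\infty)$, and the deformation itself must also thread connecting arcs through the root chains, where the denominator is not bounded below. You explicitly flag this as ``the genuine work,'' but it is precisely the load-bearing step, and it is not supplied.

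The paper (following \cite{Cermak}) sidesteps this entirely by never deforming past the rays $\arg s=\pm(\tfrac{\pi}{2}+\delta)$: on such a contour $|s^\alpha-\lambda e^{-\tau s}|\ge C_1$, the only residues that arise come from the finitely many roots in the compact sector between $\gamma(R,\tfrac{\pi}{2}+\delta)$ and $\gamma(\varepsilon/t,\tfrac{\pi}{2}+\delta)$ (each with negative real part when $\lambda\in\mathcal S_{\alpha,\tau}$, hence exponentially decaying), and the algebraic rate is extracted by shrinking the inner arc like $\varepsilon/t$, substituting $s=u^{1/\alpha}/t$, and using a two-term expansion in $e^{\tau s}/\lambda$ with an explicit remainder of order $t^{-(3\alpha-\beta+1)}$---the same $1/\Gamma(0)=0$ cancellation you use, but applied to convergent Hankel-type integrals rather than an infinite residue sum. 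To repair your version you would need either to stop the deformation at a contour crossing only finitely many roots (as the paper does) or to control the full residue series by grouping/partial summation. Two minor slips: the leading constant for $\beta=\alpha$ should involve $\lambda^2$, not $\lambda$ (the first expansion term $-e^{\tau z}/\lambda$ is entire and contributes nothing), and ``only finitely many roots to the right of any vertical line'' is true but irrelevant once the contour is the full Hankel loop.
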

\begin{proof}
This proof is given in the appendix at the end of this paper.
\end{proof}
\section{Main result}
Our aim in this section is to prove the following theorem.
\begin{theorem}[Linearized stability theorem]\label{Main result}
Consider the initial problem \eqref{IntroEq}--\eqref{Ini_Cond}. Assume that the spectrum $\sigma(A)$ of the matrix $A$ satisfies
\[
\sigma(A)\subset \mathcal{S}_{\alpha,\tau}
\]
and the function $g$ satisfies the conditions $(\textup{H}_1)$ and $(\textup{H}_2)$. Then, the trivial solution of this problem is asymptotically stable.
\end{theorem}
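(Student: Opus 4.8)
The plan is to apply Lyapunov's first method through a Lyapunov--Perron operator: reduce the linear part to a diagonal matrix, then exhibit the solution as the fixed point of a contraction on a space of functions that decay to the origin. For the reduction, note that $\mathcal S_{\alpha,\tau}$ is open and $\sigma(A)\subset\mathcal S_{\alpha,\tau}$, so for every $\epsilon>0$ there is an invertible $P=P(\epsilon)\in\C^{d\times d}$ with $P^{-1}AP=\Lambda+N$, where $\Lambda=\mathrm{diag}(\lambda_1,\dots,\lambda_d)$, $\{\lambda_1,\dots,\lambda_d\}=\sigma(A)$, and $\|N\|\le\epsilon$ (bring $A$ to Jordan form and rescale the super-diagonal). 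After complexifying the problem and extending $g$ to $\C^d\times\C^d$ in a way that preserves $(\mathrm H_1)$--$(\mathrm H_2)$, the substitution $y=P^{-1}x$ turns \eqref{IntroEq} into
\[
{}^{C\!}D_{0+}^{\alpha}y(t)=\Lambda\,y(t-\tau)+h(y(t),y(t-\tau)),\qquad h(u,v):=Nv+P^{-1}g(Pu,Pv),
\]
with $h(0,0)=0$ and $\ell_h(\varrho)\le\|N\|+\|P\|\,\|P^{-1}\|\,\ell_g(\|P\|\varrho)$. By uniqueness of solutions a real solution of \eqref{IntroEq}--\eqref{Ini_Cond} is the solution of the complexified system, so it suffices to prove asymptotic stability for the transformed equation, where $\Lambda$ is diagonal with spectrum in $\mathcal S_{\alpha,\tau}$; thus Lemma~\ref{Equivalent_eq} and Lemma~\ref{estGML} apply with constants $C_i:=C_{\alpha,\lambda_i}$ depending only on $\alpha,\tau$ and $\sigma(A)$.

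\emph{Truncation and the Lyapunov--Perron operator.} Fix $\varrho>0$ small (to be pinned down below) and replace $h$ by $h_\varrho$, obtained by composing $h$ with the radial retractions onto $\overline B(0,\varrho)$: then $h_\varrho$ is globally Lipschitz with constant at most $2\ell_h(\varrho)$, $h_\varrho(0,0)=0$, and $h_\varrho=h$ on $\overline B(0,\varrho)^2$. By Lemma~\ref{Equivalent_eq} the truncated equation has a unique global solution, given componentwise by \eqref{invariant_for} with $h_\varrho$ replacing $g$. On the complete metric space
\[
\mathcal X_\phi:=\Bigl\{\xi\in C([-\tau,\infty);\C^d):\ \xi|_{[-\tau,0]}=\phi,\ \|\xi\|_\infty\le\varrho,\ \lim_{t\to\infty}\xi(t)=0\Bigr\}
\]
(nonempty and closed in the bounded continuous functions once $\|\phi\|_\infty\le\varrho$) define $\mathcal T_\phi\xi$ by $(\mathcal T_\phi\xi)(t)=\phi(t)$ on $[-\tau,0]$ and, for $t\ge0$ and $i=1,\dots,d$,
\[
(\mathcal T_\phi\xi)^i(t)=E^{\lambda_i,\tau}_{\alpha,1}(t)\phi^i(0)+\lambda_i\!\int_{-\tau}^0 E^{\lambda_i,\tau}_{\alpha,\alpha}(t-\tau-s)H(t-\tau-s)\phi^i(s)\,ds+\int_0^t E^{\lambda_i,\tau}_{\alpha,\alpha}(t-s)\,h_\varrho^i(\xi(s),\xi(s-\tau))\,ds.
\]
Any fixed point of $\mathcal T_\phi$ lying in $\overline B(0,\varrho)$ is a solution of the untruncated transformed equation.

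\emph{Contraction estimates.} Using Lemma~\ref{estGML}: $M_1:=\max_i\sup_{t\ge0}|E^{\lambda_i,\tau}_{\alpha,1}(t)|<\infty$ (continuity on $[0,1]$ together with the $t^{-\alpha}$ bound); after the substitution $u=t-\tau-s$ and part~(iii), the middle term of $\mathcal T_\phi$ is bounded by $M_2\|\phi\|_\infty$ with $M_2:=\max_i|\lambda_i|C_i$; and since $\|h_\varrho(\xi(s),\xi(s-\tau))\|\le2\varrho\ell_h(\varrho)$ on $\mathcal X_\phi$, part~(iii) bounds the last term by $2C\varrho\ell_h(\varrho)$, where $C:=\max_i C_i$. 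Hence $\|\mathcal T_\phi\xi\|_\infty\le(M_1+M_2)\|\phi\|_\infty+2C\varrho\ell_h(\varrho)$. That $\mathcal T_\phi\xi(t)\to0$ follows from $E^{\lambda_i,\tau}_{\alpha,1}(t)\to0$, from $\int_{t-\tau}^t|E^{\lambda_i,\tau}_{\alpha,\alpha}|\to0$ (tail of an $L^1$ function), and from the fact that the convolution of the $L^1$ kernel $E^{\lambda_i,\tau}_{\alpha,\alpha}$ with the bounded function $s\mapsto h_\varrho(\xi(s),\xi(s-\tau))$ (which tends to $0$ as $s\to\infty$, because $\xi\in\mathcal X_\phi$) also tends to $0$ (split the integral at $t/2$). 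Finally the first two terms of $\mathcal T_\phi$ do not depend on $\xi$, so for $\xi,\eta\in\mathcal X_\phi$ one gets $\|\mathcal T_\phi\xi-\mathcal T_\phi\eta\|_\infty\le 2C\ell_h(\varrho)\|\xi-\eta\|_\infty$.

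\emph{Conclusion and the main difficulty.} Crucially, $C,M_1,M_2$ depend only on $\alpha,\tau,\sigma(A)$, not on $\epsilon$ or $\varrho$; so I would first choose $\epsilon$ with $2C\epsilon<\tfrac{1}{4}$ (this fixes $P$, hence $\|P\|,\|P^{-1}\|$), then choose $\varrho$ so small that $2C\|P\|\,\|P^{-1}\|\,\ell_g(\|P\|\varrho)<\tfrac{1}{4}$ (so that $q:=2C\ell_h(\varrho)<\tfrac{1}{2}$ and $2C\varrho\ell_h(\varrho)\le\varrho/2$), then choose $\delta>0$ with $(M_1+M_2)\delta\le\varrho/2$. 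For $\|\phi\|_\infty\le\delta$, $\mathcal T_\phi$ maps $\mathcal X_\phi$ into itself and is a $q$-contraction, hence has a unique fixed point $\varphi^*\in\mathcal X_\phi$; since $\|\varphi^*\|_\infty\le\varrho$ it solves the untruncated transformed equation and, by uniqueness from \cite[Theorem 3.1]{Tuan}, it is the maximal solution, giving $t_{\max}(\phi)=\infty$ and $\varphi^*(t)\to0$. Feeding $\|\varphi^*\|_\infty\le\varrho$ back into the fixed-point identity gives $\sup_{t\ge-\tau}\|\varphi^*(t)\|\le\frac{M_1+M_2}{1-q}\|\phi\|_\infty$, which yields the $\varepsilon$--$\delta$ statement of stability; transferring through $x=Py$ only alters constants. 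I expect the main obstacle to be exactly this last bookkeeping, combined with the self-map verification: showing, via the three parts of Lemma~\ref{estGML}, that the convolution term of $\mathcal T_\phi$ is bounded and decaying uniformly in $t$, and getting the two smallness parameters $\epsilon$ (to neutralize the non-diagonalizable part) and $\varrho$ (to neutralize $g$) in the right order against the fixed Mittag-Leffler constants.
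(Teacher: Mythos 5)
Your proposal is correct and follows the same backbone as the paper: bring $A$ to Jordan form, rescale the super-diagonal so the nilpotent part has norm at most a free small parameter, absorb it into the nonlinearity, and run a Banach fixed-point argument on the Lyapunov--Perron operator built from $E^{\lambda_i,\tau}_{\alpha,1}$ and $E^{\lambda_i,\tau}_{\alpha,\alpha}$, with the contraction constant controlled by Lemma \ref{estGML}(iii) exactly as in Proposition \ref{Prp2} and Lemma \ref{lemma6}. You differ in two places. First, you truncate $h$ by radial retraction so that it is globally Lipschitz before invoking Lemma \ref{Equivalent_eq}; the paper applies that lemma directly to the locally Lipschitz $h$, so your step is a small technical refinement rather than a change of method. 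Second, and more substantively, you obtain attractivity by building the condition $\lim_{t\to\infty}\xi(t)=0$ into the fixed-point space $\mathcal X_\phi$, so that decay of the solution comes for free from uniqueness of the fixed point, at the cost of verifying completeness of that subspace and that the operator preserves decay (your split of the convolution at $t/2$, using the $L^1$ tail from Lemma \ref{estGML}(i),(iii)). The paper instead works on the plain ball $B_{C_\infty}(0,\varepsilon)$ and proves attractivity afterwards by the $\limsup$ argument $a\le \ell_h(\varepsilon)\,C(\alpha,\lambda)(a+\hat\varepsilon)$, letting $\hat\varepsilon\to0$. The two routes use the same Mittag-Leffler estimates and yield the same constants; yours trades the a posteriori $\limsup$ computation for the self-map-with-decay verification. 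One shared point of looseness: both you and the paper pass from ``fixed point of the integral operator'' to ``solution of \eqref{NewSystem}'' by citing Lemma \ref{Equivalent_eq}, which is stated only in the direction solution $\Rightarrow$ integral representation; your truncation makes the patch easy (the truncated equation has a unique global solution satisfying the same integral equation, and uniqueness of continuous solutions of that integral equation follows from a routine weighted-norm or Gronwall-type argument), but it should be said explicitly.
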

For a proof of this theorem, we follow the approach of \cite{Cong1}. More precesily, first we transform the linear part of \eqref{IntroEq} to a Jordan normal form; then we construct an appropriate Lyapunov--Perron operator which is a contraction and its fiexed point is the solution of the initial problem \eqref{IntroEq}--\eqref{Ini_Cond}, and exploit the properties of the scalar generalized Mittag-Leffler function to obtain the conclusion of the theorem.

{\bf Transformation of the linear part} 

Using \cite[Theorem 6.37, pp.~146]{Shilov}, there exists a nonsingular matrix $T\in\C^{d\times d}$ transforming the matrix $A$ in the equation \eqref{IntroEq} into the Jordan normal form, i.e.,
\[
T^{-1}A T=\hbox{diag}(A_1,\dots,A_n),
\]
for $i=1,\dots,n$, the block $A_i$ is of the following form
\[
A_i=\lambda_i\, \id_{d_i\times d_i}+\eta_i\, N_{d_i\times d_i},
\]
where $\eta_i\in\{0,1\}$ and the nilpotent matrix $N_{d_i\times d_i}$ is given by
\[
N_{d_i\times d_i}:=
\left(
      \begin{array}{*7{c}}
      0  &     1         &    0      & \cdots        &  0        \\
        0        & 0    &    1     &   \cdots      &              0\\
        \vdots &\vdots        &  \ddots         &          \ddots &\vdots\\
        0 &    0           &\cdots           &  0 &          1 \\

        0& 0  &\cdots                                          &0         & 0 \\
      \end{array}
    \right)_{d_i \times d_i}.
\]
Let $\gamma$ be an arbitrary but fixed positive number. Using the transformation $P_i:=\textup{diag}(1,\gamma,\dots,\gamma^{d_i-1})$, we obtain that
\begin{equation*}
P_i^{-1} A_i P_i=\lambda_i\, \id_{d_i\times d_i}+\gamma_i\, N_{d_i\times d_i},
\end{equation*}
$\gamma_i\in \{0,\gamma\}$. Hence, under the transformation $y:=(TP)^{-1}x$, the equation \eqref{IntroEq} becomes
\begin{equation}\label{NewSystem}
^{C}D_{0+}^\alpha y(t)=\hbox{diag}(J_1,\dots,J_n)y(t-\tau)+h(y(t),y(t-\tau)),
\end{equation}
where $J_i:=\lambda_i \id_{d_i\times d_i}$ for $i=1,\dots,n$ and the function $h$ is given by
\begin{align}
\notag h(y(t),y(t-\tau)):=&\text{diag}(\gamma_1N_{d_1\times d_1},\dots,\gamma_nN_{d_n\times d_n})y(t-\tau)\\
&\hspace*{0.5cm}+(TP)^{-1}f(TPy(t),TPy(t-\tau)).\label{Eq3}
\end{align}
\begin{remark}\label{Remark1}
The function $h$ in the equation \eqref{NewSystem} is local Lipschitz continuous in a neighborhood of the origin and
\[
h(0,0)=0,\quad\text{and}\quad \lim_{\varrho\to 0}\ell_h(\varrho)=
\left\{
\begin{array}{ll}
\gamma & \hbox{if there exists } \gamma_i=\gamma,\\[1ex]
0 & \hbox{otherwise}.
\end{array}
\right.
\]
\end{remark}
\begin{remark}\label{Remark2}
If the trivial solution of equations \eqref{NewSystem} is stable (or asymptotically stable), then the trivial of \eqref{IntroEq} is the same, i.e., it is also stable (or asymptotically stable).
\end{remark}
{\bf Construction of an appropriate Lyapunov-Perron operator}

We are now introducing a Lyapunov-Perron operator associated with \eqref{NewSystem}. Before doing this, we discuss some conventions which are used in the remaining part of this section: The space $\C^d$ can be written as $\C^d=\C^{d_1}\times\dots\times\C^{d_n}$. A vector $x\in\C^d$ can be written component-wise as $x=(x^1,\dots,x^n)$.

For any $\phi\in C([-\tau,0];\C^d)$, the operator $\mathcal{T}_{\phi,r}: C([-r,\infty);\C^d)\rightarrow C([-\tau,\infty);\C^d)$ is defined by
\[
(\mathcal{T}_{\phi,\tau}\xi)(t)=((\mathcal{T}_{\phi,\tau}\xi)^1(t),\dots,(\mathcal{T}_{\phi,\tau}\xi)^n(t))^{\rm T},
\]
where for $i=1,\dots,n$ and $t\geq 0$
\begin{eqnarray*}
(\mathcal{T}_{\phi,\tau}\xi)^i(t)
&=&
E^{\lambda_i,\tau}_{\alpha,1}(t) \phi^i(0)+\lambda_i\int_{-\tau}^0 E^{\lambda_i,\tau}_{\alpha,\alpha}(t-\tau-s)H(t-\tau-s)\phi^i(s)\;ds \\
&&
+\int_0^t E^{\lambda_i,\tau}_{\alpha,\alpha}(t-s)h^i(\xi(s),\xi(s-\tau))\,ds,
\end{eqnarray*}
and 
\[
(\mathcal{T}_{\phi,\tau}\xi)(t)=\phi(t),\quad \forall t\in[-\tau,0],
\]
is called the \emph{Lyapunov-Perron operator associated with \eqref{NewSystem}}. 
Next, we provide some estimates on the operator $\mathcal{T}_{\phi,\tau}$.
\begin{proposition}\label{Prp2} Consider system \eqref{NewSystem} and suppose that
\[
\sigma{(A)}\subset \mathcal{S}_{\alpha,\tau}.
\]
Let $\varepsilon_1$ be a small positive parameter such that the function $h$ is Lipschitz continuous on $B_{C_\infty}(0,\varepsilon_1)\times B_{C_\infty}(0,\varepsilon_1)$. Then, for any $\xi,\hat\xi\in B_{C_\infty}(0,\varepsilon_1)$, we have
\begin{align*}
&\|\mathcal{T}_{\phi,\tau}\xi-\mathcal{T}_{\hat{\phi},\tau}\hat{\xi}\|_{\infty}\leq  \max\Big\{\max_{1\leq i\leq n}\sup_{t\geq 0}\Big\{|E^{\lambda_i,\tau}_{\alpha,1}(t)|+|\lambda_i|\int_{t-\tau}^t |E^{\lambda_i,\tau}_{\alpha,\alpha}(s)|\;ds\Big\}\times \|\phi-\hat\phi\|_{\infty}\\
&\hspace{1cm}+\max_{1\leq i\leq n}\int_0^\infty |E^{\lambda_i,\tau}_{\alpha,\alpha}(s)|\;ds\times \ell_h(\hat \varepsilon_1)\times\|\xi-\hat{\xi}\|_\infty,\|\phi-\hat{\phi}\|_\infty\Big\},
\end{align*}
for all $\phi,\hat{\phi}\in B_{C([-\tau,0];\C^d)}(0,\varepsilon_1)$.
\end{proposition}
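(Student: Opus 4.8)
The plan is to bound $(\mathcal{T}_{\phi,\tau}\xi)(t)-(\mathcal{T}_{\hat\phi,\tau}\hat\xi)(t)$ componentwise, separately on the two pieces $t\in[-\tau,0]$ and $t\in[0,\infty)$ of the time axis, and then take the supremum over all $t\ge-\tau$; since $\sup_{t\ge-\tau}=\max\{\sup_{-\tau\le t\le0},\sup_{t\ge0}\}$, the outer maximum in the statement will come out of exactly this split. On $[-\tau,0]$ the operator returns the initial datum, so $(\mathcal{T}_{\phi,\tau}\xi)(t)-(\mathcal{T}_{\hat\phi,\tau}\hat\xi)(t)=\phi(t)-\hat\phi(t)$, and this piece contributes at most $\|\phi-\hat\phi\|_\infty$, which is precisely the second entry of the outer maximum. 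Hence all the work lies on $[0,\infty)$.

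Fix $t\ge0$ and $i\in\{1,\dots,n\}$. Subtracting the defining formulas for the $i$-th components and using the triangle inequality splits $(\mathcal{T}_{\phi,\tau}\xi)^i(t)-(\mathcal{T}_{\hat\phi,\tau}\hat\xi)^i(t)$ into three terms: the boundary term $E^{\lambda_i,\tau}_{\alpha,1}(t)(\phi^i(0)-\hat\phi^i(0))$; the delayed-datum integral $\lambda_i\int_{-\tau}^0 E^{\lambda_i,\tau}_{\alpha,\alpha}(t-\tau-s)H(t-\tau-s)(\phi^i(s)-\hat\phi^i(s))\,ds$; and the nonlinearity integral $\int_0^t E^{\lambda_i,\tau}_{\alpha,\alpha}(t-s)(h^i(\xi(s),\xi(s-\tau))-h^i(\hat\xi(s),\hat\xi(s-\tau)))\,ds$. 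For the first two I pull $\|\phi-\hat\phi\|_\infty$ out of the modulus/integral and, in the second, substitute $u=t-\tau-s$, turning it into $|\lambda_i|\,\|\phi-\hat\phi\|_\infty\int_{t-\tau}^{t}|E^{\lambda_i,\tau}_{\alpha,\alpha}(u)|\,H(u)\,du$; bounding $H(u)\le1$ (and using that $E^{\lambda_i,\tau}_{\alpha,\alpha}$ vanishes on the negative axis, so nothing is lost when $t<\tau$) produces the quantity $|\lambda_i|\int_{t-\tau}^{t}|E^{\lambda_i,\tau}_{\alpha,\alpha}(s)|\,ds$ appearing in the statement. Adding these two contributions at the fixed $t$ and then maximizing over $i$ and over $t\ge0$ yields the first summand inside the first entry of the outer maximum.

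For the third term, since $\xi,\hat\xi\in B_{C_\infty}(0,\varepsilon_1)$ we have $\xi(s),\hat\xi(s),\xi(s-\tau),\hat\xi(s-\tau)\in B_{\R^d}(0,\varepsilon_1)$ for every $s\ge0$, so the set on which $h$ is Lipschitz is never left, and the definition of $\ell_h$ gives $\|h^i(\xi(s),\xi(s-\tau))-h^i(\hat\xi(s),\hat\xi(s-\tau))\|\le\ell_h(\hat\varepsilon_1)\|\xi-\hat\xi\|_\infty$, where $\hat\varepsilon_1\ge\varepsilon_1$ is the radius of a ball in $\R^d$ absorbing $B_{\R^d}(0,\varepsilon_1)$ in the chosen product norm. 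Consequently the third term is at most $\ell_h(\hat\varepsilon_1)\|\xi-\hat\xi\|_\infty\int_0^t|E^{\lambda_i,\tau}_{\alpha,\alpha}(t-s)|\,ds=\ell_h(\hat\varepsilon_1)\|\xi-\hat\xi\|_\infty\int_0^t|E^{\lambda_i,\tau}_{\alpha,\alpha}(s)|\,ds\le\ell_h(\hat\varepsilon_1)\|\xi-\hat\xi\|_\infty\int_0^\infty|E^{\lambda_i,\tau}_{\alpha,\alpha}(s)|\,ds$, and taking the maximum over $i$ gives the second summand. Combining the resulting estimate on $[0,\infty)$ with the bound $\|\phi-\hat\phi\|_\infty$ on $[-\tau,0]$, via the splitting of $\sup_{t\ge-\tau}$ mentioned above, yields exactly the asserted inequality.

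I do not foresee a genuine obstacle: the argument is a chain of triangle inequalities together with a single change of variables. The only points that need care are the correct treatment of the Heaviside cut-off in the delayed-datum term when $t<\tau$, and the implicit fact that the right-hand side is finite (which is what makes the estimate useful later for turning $\mathcal{T}_{\phi,\tau}$ into a contraction): this finiteness follows from Lemma~\ref{estGML}, part~(ii) (together with continuity on the compact interval $[0,1]$) bounding $\sup_{t\ge0}|E^{\lambda_i,\tau}_{\alpha,1}(t)|$, and part~(iii) bounding both $\int_0^\infty|E^{\lambda_i,\tau}_{\alpha,\alpha}(s)|\,ds$ and $\sup_{t\ge0}\int_{t-\tau}^{t}|E^{\lambda_i,\tau}_{\alpha,\alpha}(s)|\,ds$ by $C_{\alpha,\lambda_i}$. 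The remaining bookkeeping — absorbing $\|\xi(s)-\hat\xi(s)\|+\|\xi(s-\tau)-\hat\xi(s-\tau)\|$ into $\|\xi-\hat\xi\|_\infty$ through the product norm on $\C^d=\C^{d_1}\times\dots\times\C^{d_n}$ — is routine.
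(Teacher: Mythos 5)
Your argument is correct and coincides with the paper's own proof: both estimate $(\mathcal{T}_{\phi,\tau}\xi)^i(t)-(\mathcal{T}_{\hat\phi,\tau}\hat\xi)^i(t)$ for $t\ge 0$ by the triangle inequality over the three terms (boundary, delayed datum after the substitution $u=t-\tau-s$, and nonlinearity via $\ell_h$ and $\int_0^t|E^{\lambda_i,\tau}_{\alpha,\alpha}(s)|\,ds\le\int_0^\infty|E^{\lambda_i,\tau}_{\alpha,\alpha}(s)|\,ds$), and handle $[-\tau,0]$ by the identity $(\mathcal{T}_{\phi,\tau}\xi)(t)-(\mathcal{T}_{\hat\phi,\tau}\hat\xi)(t)=\phi(t)-\hat\phi(t)$, which produces the outer maximum. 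The only cosmetic difference is the Lipschitz constant: the paper bounds it by $\ell_h(\max\{\|\xi\|_\infty,\|\hat\xi\|_\infty\})\le\ell_h(\varepsilon_1)$ (the $\hat\varepsilon_1$ in the statement is evidently a typo for $\varepsilon_1$), whereas you introduce an auxiliary radius $\hat\varepsilon_1$ to account for the product norm, which is harmless.
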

\begin{proof}
For $i=1,\dots,n$ and $t\geq 0$, we get
\begin{align*}
\|(\mathcal{T}_{\phi,\tau} \xi)^i(t)-(\mathcal{T}_{\hat{\phi},\tau} \hat{\xi})^i(t)\| \le &
\;\|\phi-\hat \phi\|_\infty \Big(|E^{\lambda_i,\tau}_{\alpha,1} (t)| +|\lambda_i|\int_{t-\tau}^{t} |E^{\lambda_i,\tau}_{\alpha,\alpha} (s)|\;ds\Big)\\
&\hspace{-30mm}+\ell_h(\max\{\|\xi\|_\infty,\|\widehat{\xi}\|_\infty\})\times \|\xi-\widehat{\xi}\|_\infty \int_0^t|E^{\lambda_i,\tau}_{\alpha,\alpha} (s)|\;ds.
\end{align*}
Hence, for any $\xi,\hat\xi\in B_{C_\infty}(0,\varepsilon_1)$, we have
\begin{align*}
&\|\mathcal{T}_{\phi,\tau}\xi-\mathcal{T}_{\hat{\phi},\tau}\hat{\xi}\|_{\infty}\leq  \max\Big\{\max_{1\leq i\leq n}\sup_{t\geq 0}\Big\{|E^{\lambda_i,\tau}_{\alpha,1}(t)|+|\lambda_i|\int_{t-\tau}^t |E^{\lambda_i,\tau}_{\alpha,\alpha}(s)|\;ds\Big\}\times \|\phi-\hat\phi\|_{\infty}\\
&\hspace{1cm}+\max_{1\leq i\leq n}\int_0^\infty |E^{\lambda_i,\tau}_{\alpha,\alpha}(s)|\;ds\times \ell_h(\varepsilon_1)\times\|\xi-\hat{\xi}\|_\infty,\|\phi-\hat{\phi}\|_\infty\Big\},
\end{align*}
for all $\phi,\hat{\phi}\in B_{C([-\tau,0];\C^d)}(0,\varepsilon_1)$.
The proof is complete.
\end{proof}
From the proposition above, by letting $C(\lambda,\alpha):=\max_{1\le i\le n}\int_0^\infty |E_{\alpha,\alpha}^{\lambda_i,\tau}(s)|\;ds$, for any $\xi,\hat\xi\in B_{C_\infty}(0,\varepsilon_1)$, we have
\begin{equation*}
\|\mathcal{T}_{\phi,\tau}\xi-\mathcal{T}_{{\phi},\tau}\hat{\xi}\|_{\infty}\leq  C(\lambda,\alpha)\times \ell_h(\varepsilon_1)\times\|\xi-\hat{\xi}\|_\infty,
\end{equation*}
for all $\phi\in C([-\tau,0];\C^d)$.
Note that the Lipschitz constant
$C(\alpha,\lambda)$ is independent of the constant $\varepsilon_1$ which is hidden in the coefficients of system \eqref{NewSystem}. From now on, we
choose and fix the constant $\varepsilon_2$ as $\varepsilon_2:=\min\{\varepsilon_1,\frac{1}{2C(\alpha,\lambda)}\}$. The remaining question is now to choose a ball
with small radius in $C_\infty(\R_{\geq 0},\C^d)$ such that the restriction of the Lyapunov-Perron operator to this ball is
strictly contractive.
\begin{lemma}\label{lemma6}
The following statements hold:
\vspace{-5mm}
\begin{itemize}
\item [(i)] There exists $\varepsilon>0$ such that
\begin{equation}\label{Eq7a}
q:=C(\alpha,\lambda)\times  \ell_h(\varepsilon) < 1.
\end{equation}
\item [(ii)] Choose and fix $\varepsilon>0$ satisfying \eqref{Eq7a}. Define
\begin{equation}\label{Eq7b}
\delta=\frac{\varepsilon(1-q)}{\max_{1\le i\le n}\sup_{t\geq 0}\Big\{|E_{\alpha,1}^{\lambda_i,\tau}(t)|+|\lambda_i|\int_{t-\tau}^t |E^{\lambda_i,\tau}_{\alpha,\alpha}(s)|ds+1\Big\}}.
\end{equation}
Then, for any $\phi\in
B_{C([-\tau,0];\C^d)}(0,\delta)$, we have $\mathcal T_{\phi,\tau} (B_{C_{\infty}}(0,\varepsilon))\subset B_{C_{\infty}}(0,\varepsilon)$ and
\begin{equation*}\label{LipschitzContinuity}
\|\mathcal T_{\phi,\tau}\xi-\mathcal T_{\phi,\tau}\hat {\xi}\|_\infty \leq q\|\xi-\hat{\xi}\|_\infty\quad\hbox{ for all } \xi,\hat{\xi}\in
B_{C_{\infty}}(0,\varepsilon).
\end{equation*}
\end{itemize}
\end{lemma}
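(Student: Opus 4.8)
The plan is to prove the two items in turn: item~(i) rests on Remark~\ref{Remark1} together with Lemma~\ref{estGML}(iii), and item~(ii) on the pointwise estimate established inside the proof of Proposition~\ref{Prp2}.

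For item~(i), recall that $C(\alpha,\lambda)=\max_{1\le i\le n}\int_0^\infty |E_{\alpha,\alpha}^{\lambda_i,\tau}(s)|\,ds$ is, by Lemma~\ref{estGML}(iii), a finite positive constant depending only on $\alpha,\tau$ and the eigenvalues $\lambda_1,\dots,\lambda_n\in\mathcal{S}_{\alpha,\tau}$; in particular it does not involve the scaling parameter $\gamma$ used in the transformation of the linear part. Since $\gamma>0$ is at our disposal in that step and $M:=\max_{1\le i\le n}\sup_{t\ge0}\big\{|E_{\alpha,1}^{\lambda_i,\tau}(t)|+|\lambda_i|\int_{t-\tau}^t|E_{\alpha,\alpha}^{\lambda_i,\tau}(s)|\,ds\big\}$ is likewise independent of $\gamma$, we fix $\gamma$ so that $\gamma\,C(\alpha,\lambda)<1$. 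By Remark~\ref{Remark1}, $\lim_{\varrho\to0^+}\ell_h(\varrho)\in\{0,\gamma\}$, hence $\lim_{\varrho\to0^+}C(\alpha,\lambda)\ell_h(\varrho)<1$; and since $\varrho\mapsto\ell_h(\varrho)$ is nondecreasing (a supremum over a set that grows with $\varrho$), there is $\varepsilon>0$, which we may also take no larger than the Lipschitz radius $\varepsilon_1$ of $h$, with $q:=C(\alpha,\lambda)\ell_h(\varepsilon)<1$. This is \eqref{Eq7a}.

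For item~(ii), fix such $\varepsilon\le\varepsilon_1$ and the corresponding $q<1$; with $M$ as above the denominator in \eqref{Eq7b} is exactly $M+1$, so $\delta=\varepsilon(1-q)/(M+1)$ and $0<\delta<\varepsilon$. Since $h(0,0)=0$ we have $\mathcal{T}_{0,\tau}0\equiv0$, so for $\phi\in B_{C([-\tau,0];\C^d)}(0,\delta)$ and $\xi\in B_{C_\infty}(0,\varepsilon)$ I would insert $\hat\phi=0$, $\hat\xi=0$ into the pointwise bound from the proof of Proposition~\ref{Prp2} and estimate $\ell_h(\|\xi\|_\infty)\le\ell_h(\varepsilon)$ and $\int_0^t|E_{\alpha,\alpha}^{\lambda_i,\tau}(s)|\,ds\le C(\alpha,\lambda)$, obtaining $\|(\mathcal{T}_{\phi,\tau}\xi)(t)\|\le M\|\phi\|_\infty+q\|\xi\|_\infty\le M\delta+q\varepsilon$ for $t\ge0$ and $\|\phi(t)\|\le\delta$ for $t\in[-\tau,0]$. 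The elementary identity $M\delta+q\varepsilon=\varepsilon\,(M+q)/(M+1)$ together with $q<1$ and $M\ge0$ gives $M\delta+q\varepsilon<\varepsilon$ and $\delta<\varepsilon$, hence $\|\mathcal{T}_{\phi,\tau}\xi\|_\infty\le\varepsilon$, i.e. $\mathcal{T}_{\phi,\tau}\big(B_{C_\infty}(0,\varepsilon)\big)\subset B_{C_\infty}(0,\varepsilon)$. For the contraction estimate I would use the same pointwise bound with $\hat\phi=\phi$, so the $\phi$-terms cancel and only $\ell_h(\varepsilon)\,C(\alpha,\lambda)\,\|\xi-\hat\xi\|_\infty=q\|\xi-\hat\xi\|_\infty$ survives for $t\ge0$ (and $0$ for $t\in[-\tau,0]$), whence $\|\mathcal{T}_{\phi,\tau}\xi-\mathcal{T}_{\phi,\tau}\hat\xi\|_\infty\le q\|\xi-\hat\xi\|_\infty$ for all $\xi,\hat\xi\in B_{C_\infty}(0,\varepsilon)$.

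The one genuinely delicate point is the bookkeeping in item~(i): one must know that $C(\alpha,\lambda)$ (and $M$) is independent of the data still to be pinned down — the Jordan scaling $\gamma$ and the Lipschitz radius $\varepsilon_1$ of $h$ — for otherwise $q<1$ could not be enforced; this is exactly what the \emph{uniform} bound of Lemma~\ref{estGML}(iii) supplies. Everything else is direct substitution into the estimates of Proposition~\ref{Prp2} together with the trivial inequality $(M+q)/(M+1)<1$.
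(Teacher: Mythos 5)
Your proposal is correct and follows essentially the same route as the paper: part~(i) from Remark~\ref{Remark1} together with the finiteness of $C(\alpha,\lambda)$ from Lemma~\ref{estGML}(iii), and part~(ii) by substituting into the estimates of Proposition~\ref{Prp2} with $\hat\phi=0,\hat\xi=0$ (resp.\ $\hat\phi=\phi$). You are in fact slightly more careful than the paper in (i), making explicit that the scaling parameter $\gamma$ must be fixed with $\gamma\,C(\alpha,\lambda)<1$ and that $C(\alpha,\lambda)$ and $M$ are independent of $\gamma$ — a point the paper leaves implicit.
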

\begin{proof}
\noindent (i) By Remark \ref{Remark1}, $\lim_{\varrho\to 0}\ell_h(\varrho)\leq \gamma$. Hence, we can choose a positive constant $\varepsilon$ such that 
\[
q:=C(\alpha,\lambda)\times  \ell_h(\varepsilon) < 1,
\]
and the assertion (i) is proved.

\noindent (ii) According to
Proposition \ref{Prp2}, for any $\phi\in B_{C([-\tau,0];\C^d)}(0, \delta)$ and any $\xi \in B_{C_\infty}(0,\varepsilon)$, we obtain that
\begin{align*}
\|\mathcal T_{\phi, \tau}\xi\|_\infty &\leq \;\max_{1\le i \le n}\sup_{t\geq 0}\Big\{|E_{\alpha,1}^{\lambda_i,\tau}(t)|+|\lambda_i|\int_{t-\tau}^t |E_{\alpha,\alpha}^{\lambda_i,\tau}(s)|\;ds+1\Big\}\times \|\phi\|_\infty\\
&\hspace*{1cm}+ C(\alpha,\lambda)\times \ell_{h}(\varepsilon)\times\|\xi\|_{\infty}\\
& \leq \;(1-q)\varepsilon+q\varepsilon,
\end{align*}
which proves that $\mathcal T_{\phi,\tau}(B_{C_{\infty}}(0,\varepsilon))\subset B_{C_{\infty}}(0,\varepsilon)$. Furthermore, we also have
\begin{eqnarray*}
\|\mathcal T_{\phi,\tau}\xi-\mathcal T_{\phi,\tau}\widehat{\xi}\|_\infty &\leq&
C(\alpha,\lambda)\times \ell_{h}(\varepsilon)\times\;\|\xi-\widehat{\xi}\|_\infty\\[1.5ex]
&\leq & q\|\xi-\widehat{\xi}\|_\infty,
\end{eqnarray*}
which concludes the proof.
\end{proof}
\begin{proof}[Proof of Theorem \ref{Main result}]
Due to Remark \ref{Remark2}, it is sufficient to prove the asymptotic stability for the trivial solution of system \eqref{NewSystem}. For this
purpose, let $\delta$ be defined as in \eqref{Eq7b}. Let $\phi\in B_{C([-\tau,0],\C^d}(0,\delta)$ be arbitrary. Using Lemma \ref{lemma6} and the Contraction Mapping
Principle, there exists a unique fixed point $\xi \in B_{C_\infty}(0,\varepsilon)$ of $\mathcal{T}_{\phi,\tau}$. According to Lemma \ref{Equivalent_eq}, this point is also a solution of
\eqref{NewSystem} with the initial condition $\xi(t)=\phi(t)$ for all $t\in [-\tau,0]$. Since the equation
\eqref{NewSystem} has unique global solution in $B_{C_\infty}(0,\varepsilon)$ for each initial condition $\phi\in B_{C([-\tau,0],\C^d}(0,\delta)$, the trivial solution is stable. To complete the proof of the theorem, we have to show that the trivial solution is
attractive. Suppose that $\xi(t)=(\xi^1(t),\dots,\xi^n(t))$ is the solution of \eqref{NewSystem} which satisfies $\xi(t)=\phi(t)$ for every $t\in[-\tau,0]$, where $\phi\in B_{C([-\tau,0];\C^d)}(0,\delta)$. From Lemma \ref{lemma6}, we see that $\|\xi\|_\infty\le \varepsilon$. Put
$a:=\limsup_{t\to\infty}\|\xi(t)\|$, then $a\in [0,\varepsilon]$. Let $\hat\varepsilon$ be a positive number small enough. Then, there exists $T(\hat\varepsilon)>0$
such that
\[
\|\xi(t)\|\le (a+\hat\varepsilon)\qquad \textup{for any}\;t\ge T(\hat\varepsilon).
\]
For each $i=1,\dots,n$, we will estimate $\limsup_{t\to\infty}\|\xi^i(t)\|$. According to Lemma \ref{estGML}(i) and \ref{estGML}(ii), we obtain
\begin{itemize}
\item[(i)] $\lim_{t\to \infty}E_{\alpha,1}^{\lambda_i,\tau}(t)=0$;
\item[(ii)] $\lim_{t\to\infty}\int_{-\tau}^0E_{\alpha,\alpha}^{\lambda_i,\tau}(t-\tau-s)H(t-\tau-s)\phi^i(s)\;ds=0$;
\item[(iii)]
\begin{eqnarray*}
&&\limsup_{t\to\infty}\left\|\int_0^{T(\hat\varepsilon)}E_{\alpha,\alpha}^{\lambda_i,\tau}(t-s)h^i(\xi(s))\;ds\right\|\\[1.5ex]
&\le& \max_{t\in [0,T(\varepsilon)]}\|h^i(\xi(t))\|\limsup_{t\to \infty}\int_0^{T(\hat\varepsilon)}\frac{C_{\alpha,\lambda_i}}{(t-s)^{\alpha+1}}ds\\
&= &0.
\end{eqnarray*}
\end{itemize}
Therefore, from the fact that $\xi^i(t)=(\mathcal{T}_x \xi)^i(t)$, we have
\begin{eqnarray*}
\limsup_{t\to\infty}\|\xi^i(t)\| &=&
\limsup_{t\to\infty}\left\|\int_{T(\hat\varepsilon)}^tE_{\alpha,\alpha}^{\lambda_i,\tau}(t-s)h^i(\xi(s))ds\right\|\\
&\le& \ell_h(\varepsilon)\times C_{\alpha,\lambda_i}\times (a+\hat\varepsilon),
\end{eqnarray*}
where we use the estimate
\begin{eqnarray*}
\left|\int_{T(\hat\varepsilon)}^tE_{\alpha,\alpha}^{\lambda_i,\tau}(t-s)\;ds\right| &=&
\left|\int_0^{t-T(\hat\varepsilon)}E_{\alpha,\alpha}^{\lambda_i,\tau}(u)\;du\right|\\
&\le& C_{\alpha,\lambda_i},
\end{eqnarray*}
see Lemma \ref{estGML}(iii), to obtain the inequality above. Thus,
\begin{align*}
a &\le \max\left\{\limsup_{t\to\infty}\|\xi^1(t)\|,\dots,\limsup_{t\to\infty}\|\xi^n(t)\|\right\}\\
& \le  \ell_h(\tau)\times C(\alpha,\lambda)\times (a+\hat\varepsilon).
\end{align*}
Letting $\hat\varepsilon\to 0$, we have
\[
a \le \ell_h(\varepsilon)\times C(\alpha,\lambda)\times a.
\]
Due to the fact $\ell_h(\varepsilon)\times C(\alpha,\lambda)<1$, we get that $a=0$ and the proof is complete.
\end{proof}
\section{Conclusions}
This paper has studied the asymptotic behavior of solutions to nonlinear fractional differential equations with a time delay. We have shown that an equilibrium of a nonlinear Caputo
fractional differential equation with a time delay is asymptotically stable if its linearization at the equilibrium is asymptotically stable, that is, we have give a sufficient condition of the asymptotic stability basing on the characteristic spectrum of the linear part to the original equation. This is a new contribution in the qualitative theory of nonlinear fractional differential equations with delays. In the future, we hope to obtain a characteristic spectrum for the stability of fractional differential equations with multi-delays in high dimensional spaces. 
\section*{Acknowledgement}
The second author is supported by the Vietnam National Foundation for
Science and Technology Development (NAFOSTED) under Grant No. 101.03--2017.01.
\section*{Appendix}
\begin{proof}[Proof of Lemma \ref{estGML}]

For $\mu>0$ and $\theta\in (0,\pi)$, we denote $\gamma(\mu,\theta)$ the oriented contour formed by three segments:
\begin{itemize}
\item $\arg{(z)}=-\theta$,\quad $|z|\geq \mu$;
\item $-\theta\leq \arg{(z)}\leq \theta$,\quad $|z|=\mu$;
\item $\arg{(z)}=\theta$,\quad $|z|\geq \mu$.
\end{itemize}
From \cite[Proposition 1 (ii)]{Cermak}, we can choose a positive constant $\delta$ such that all zeros $z_i$ of the function $s^\alpha-\lambda \exp{(-\tau s)}$ satisfy $|\arg{(z_i)}|\ne \frac{\pi}{2}+\delta$, and there are only finitely many of them satisfying $|\arg{(z_i)}|\leq \frac{\pi}{2}+\delta$. Hence, there exist $R>\varepsilon>0$ such that all $z_i$ lie to the left of $\gamma(R,\frac{\pi}{2}+\delta)$ and those satisfying $|\arg{(z_i)}|\leq \frac{\pi}{2}+\delta$ are located to the right of $\gamma(\varepsilon,\frac{\pi}{2}+\delta)$, see \cite[p. 116]{Cermak}. Let $\beta\in \{1,\alpha\}$. For $t\geq 1$, from \cite[p. 116]{Cermak}, we have
\begin{align*}
E_{\alpha,\beta}^{\lambda,\tau}(t)&=\frac{1}{2\pi i}\int_{\gamma(R,\frac{\pi}{2}+\delta)}\frac{s^{\alpha-\beta}\exp{(ts)}}{s^\alpha-\lambda \exp{(-\tau s)}}\;ds\\
&=I^1(t)+I^2(t),
\end{align*}
where
\[
I^1(t)=\frac{1}{2\pi i}\int_{\gamma(\frac{\varepsilon}{t},\frac{\pi}{2}+\delta)}\frac{s^{\alpha-\beta}\exp{(ts)}}{s^\alpha-\lambda \exp{(-\tau s)}}\;ds,
\]
and
\[
I^2(t)=\frac{1}{2\pi i}\int_{\substack{\gamma(R,\frac{\pi}{2}+\delta)-\gamma(\frac{\varepsilon}{t},\frac{\pi}{2}+\delta)}}\frac{s^{\alpha-\beta}\exp{(ts)}}{s^\alpha-\lambda \exp{(-\tau s)}}\;ds.
\]
For $I^1(t)$, we use the representation
\[
I^1(t)=I^1_1(t)+I^1_2(t)+I^1_3(t),
\]
where
\[
I^1_1(t)=-\frac{1}{\lambda 2 \pi i}\int_{\gamma(\frac{\varepsilon}{t},\frac{\pi}{2}+\delta)}s^{\alpha-\beta}\exp{((\tau+t)s)}\;ds,
\]
\[
I^1_2(t)=-\frac{1}{\lambda^2 2 \pi i}\int_{\gamma(\frac{\varepsilon}{t},\frac{\pi}{2}+\delta)}s^{2\alpha-\beta}\exp{((2\tau+t)s)}\;ds,
\]
\[
I^1_3(t)=\frac{1}{\lambda^ 2 2\pi i}\int_{\gamma(\frac{\varepsilon}{t},\frac{\pi}{2}+\delta)}\frac{s^{3 \alpha-\beta}\exp{((2\tau+t)s)}}{s^\alpha-\lambda \exp{(-\tau s)}}\;ds.
\]
Using the change of variable $s=\frac{u^{1/\alpha}}{t}$, we have
\[
I^1_1(t)=-\frac{1}{\lambda}\frac{1}{ 2\alpha \pi i}\int_{\gamma(\varepsilon^\alpha,\frac{\alpha\pi}{2}+\delta)} u^{\frac{1-\beta}{\alpha}}\exp{((1+\frac{\tau}{t})u^{1/\alpha})}\;du\times \frac{1}{t},
\]
which, by changing the variable $\nu=(1+\frac{\tau}{t})^\alpha u$, impplies
\begin{align}\label{e1}
\notag I^1_1(t)=&-\frac{1}{\lambda}\frac{1}{2\alpha\pi i}\int_{\gamma((1+\frac{\tau}{t})^\alpha\varepsilon^\alpha,\frac{\alpha\pi}{2}+\delta)}\frac{\nu^{\frac{1-\beta}{\alpha}}\exp{(\nu^{1/\alpha})}}{(1+\frac{\tau}{t})^{\alpha-\beta+1}}\;d\nu\times \frac{1}{t^{\alpha-\beta+1}}\\
\notag =&-\frac{1}{\lambda}\frac{1}{2\alpha\pi i}\int_{\gamma((1+\frac{\tau}{t})^\alpha\varepsilon^\alpha,\frac{\alpha\pi}{2}+\delta)}\nu^{\frac{1-\beta}{\alpha}}\exp{(\nu^{1/\alpha})}\;d\nu\times \frac{1}{t+\tau}\\
=&-{\frac{1}{\lambda}\times\Big(\frac{1}{\Gamma(z)}}\Big)_{|z=\beta-\alpha}\times \frac{1}{(t+\tau)^{\alpha-\beta+1}},\quad \forall t\geq 1.
\end{align}
For $I^1_2(t)$, by using the variable $s=\frac{u^{1/\alpha}}{t}$, we have
\begin{align*}
I^1_2(t)=&-\frac{1}{\lambda^2}\frac{1}{2\pi i}\int_{\gamma(\varepsilon^\alpha,\frac{\alpha\pi}{2}+\delta)} \frac{u^{\frac{2\alpha-\beta}{\alpha}} \exp{((1+\frac{2\tau}{t})u^{1/\alpha})}}{t^\alpha}\frac{1}{\alpha t}u^{\frac{1}{\alpha}-1}\;du\\
=&-\frac{1}{\lambda^2}\frac{1}{2\alpha \pi i}
\int_{\gamma(\varepsilon^\alpha,\frac{\alpha\pi}{2}+\delta)} u^{1/\alpha} \exp{((1+\frac{2\tau}{t})u^{1/\alpha})}\;du\times
\frac{1}{t^{2\alpha-\beta+1}}.
\end{align*}
Put $\nu=(1+\frac{2\tau}{t})^\alpha u$, we obtain
\begin{align}\label{e2}
\notag I^1_2(t)=&-\frac{1}{\lambda^2}\frac{1}{2\alpha \pi i}\int_{\gamma((1+\frac{2\tau}{t})^\alpha \varepsilon^\alpha, \frac{\alpha \pi}{2}+\alpha \delta)}\nu^{1/\alpha} \exp{(\nu^{1/\alpha})}\;d\nu\times \frac{1}{(t+2\tau)^{2\alpha-\beta+1}}\\
=&-\frac{1}{\lambda^2}\times \Big(\frac{1}{\Gamma(z)}\Big)_{z=\beta-2\alpha}\times \frac{1}{(t+2\tau)^{2\alpha-\beta+1}},\quad \forall t\geq 1.
\end{align}
For $I^1_3(t)$, we have
\[
I^1_3(t)=\frac{1}{\lambda^2}\times \frac{1}{2\alpha \pi i}\int_{\gamma(\varepsilon^\alpha,\frac{\alpha\pi}{2}+\delta)}\frac{u^{\frac{2\alpha-\beta+1}{\alpha}}\exp{(1+\frac{3\tau}{t})u^{1/\alpha}}}{u\exp{(\frac{\tau u^{1/\alpha}}{t})}-\lambda t^\alpha}\;du\times \frac{1}{t^{2\alpha-\beta+1}}.
\]
Note that there exists a positive constant $C_1$ such that
\[
|s^\alpha-\lambda \exp{(-\tau s)}|\geq C_1,\quad \forall s\in \gamma(\frac{\varepsilon}{t},\frac{\pi}{2}+\delta),\; \forall t\geq 1.
\]
Thus,
\[
\left|u\exp{(\frac{\tau u^{1/\alpha}}{t})}-\lambda t^\alpha\right|\geq C_1 t^\alpha |\exp{(\frac{\tau u^{1/\alpha}}{t})}|,\quad \forall u\in \gamma(\varepsilon^\alpha,\frac{\alpha\pi}{2}+\delta),\; t\geq 1.
\]
On the other hand, there exists a constant positive $C_2$ satisfying
\begin{align*}
\int_{\gamma(\varepsilon^\alpha,\frac{\alpha\pi}{2}+\delta)} |u|^{\frac{\alpha+1}{\alpha}}|\exp{((1+\frac{2\tau }{t})u^{1/\alpha})}||du|\leq C_2,\quad \forall t\geq 1.
\end{align*}
This implies that
\begin{equation}\label{e3}
|I^1_3(t)|\le \frac{C_2}{C_1|\lambda|^2 2\alpha \pi t^{3\alpha-\beta+1}},\quad \forall t\geq 1.
\end{equation}
We now estimate the quantity $I^2(t)$. Because the domain $\gamma(R,\frac{\pi}{2}+\delta)-\gamma(\frac{\varepsilon}{t},\frac{\pi}{2}+\delta)$ is a compact set in the complex plane $\C$ and $s^\alpha-\lambda \exp{(-\tau s)}$ is analytic in this set, there is a finite number of zeros of $s^\alpha-\lambda \exp{(-\tau s)}$ in $\gamma(R,\frac{\pi}{2}+\delta)-\gamma(\frac{\varepsilon}{t},\frac{\pi}{2}+\delta)$. Let us denote these zeros by $z_1,\dots,z_k$. According to \cite[Lemma 2]{Cermak}, $z_1,\dots,z_k$ are single zeros of $s^\alpha-\lambda \exp{(-\tau s)}$. From \cite[p. 101]{Kempfle}, we have
\begin{align}\label{e4}
\notag I^2(t)=& \sum_{i=1}^k \text{Res}_{z_i}\left\{\frac{\exp{(ts)}}{s^{\beta-\alpha}(s^\alpha-\lambda \exp{(-\tau s)})}\right\}\\
=& \sum_{i=1}^k \frac{\exp{(tz_i)}}{\beta {z_i}^{\beta-1}-((\beta-\alpha)\lambda z_i^{\beta-\alpha-1}-\tau \lambda z_i^{\beta-\alpha})\exp{(-\tau z_i)}},
\end{align}
where $\text{Res}_{z_i}$ is the residue at $z_i$ of $s^\alpha-\lambda \exp{(-\tau s)}$. Hence, there is a constant $C_3>0$ such that 
\[
|I^2(t)|\leq C_3,\quad \forall t\geq 1.
\]

\noindent (i) Note that $$\Big(\frac{1}{\Gamma(z)}\Big)_{|z=0}=0.$$ For $\beta=\alpha$, from \eqref{e1}, \eqref{e2}, \eqref{e3} and \eqref{e4}, we can find a constant $C_{\alpha, \lambda}>0$ such that
\[
|E^{\lambda,\tau}_{\alpha,\alpha}(t)\leq \frac{C_{\alpha,\lambda}}{t^{\alpha+1}},\quad \forall t\geq 1.
\]

\noindent (ii) Similarly, For $\beta=1$, from \eqref{e1}, \eqref{e2}, \eqref{e3} and \eqref{e4}, we can find a constant $C_{\alpha,\lambda}>0$ such that
\[
|E^{\lambda,\tau}_{\alpha,1}(t)|\leq \frac{C_{\alpha,\lambda}}{t^{\alpha}},\quad \forall t\geq 1.
\]

\noindent (iii) First, we will prove that 
\[
\int_0^1|E^{\lambda,\tau}_{\alpha,\alpha}(s)|\;ds
\]
is bounded. Consider the following cases.

\underline{\bf{The case $\tau\geq 1$.}} We have
\begin{align*}
\int_0^1|E^{\lambda,\tau}_{\alpha,\alpha}(s)|\;ds &\leq \int_0^1\sum_{0\leq k\tau\leq s}\frac{|\lambda|^k (s-k\tau)^{\alpha k+\alpha-1}}{\Gamma(\alpha k+\alpha)}H(s-k\tau)\;ds\\
&=\int_0^1 \frac{s^{\alpha-1}}{\Gamma(\alpha)}\;ds\\
&=\frac{1}{\Gamma(\alpha+1)}.
\end{align*}
\underline{\bf{The case $0\leq \tau<1$.}} Let $k_0\in\N$ be the number satisfying $k_0\tau<1$ and $(k_0+1)\tau\geq 1$. We can partition the interval $[0,1]$ into $k_0+1$ subintervals as $[0,\tau],\dots,[k_0\tau,1]$. Then,
\begin{align*}
&\int_0^1|E^{\lambda,\tau}_{\alpha,\alpha}(s)|\;ds\leq \int_0^1\sum_{0\leq k\tau\leq s}\frac{|\lambda|^k (s-k\tau)^{\alpha k+\alpha-1}}{\Gamma(\alpha k+\alpha)}H(s-k\tau)\;ds\\
\hspace*{0.5cm}&=\sum_{i=0}^{k_0-1}\int_{i\tau}^{(i+1)\tau}\sum_{0\leq k\tau\leq s}\frac{|\lambda|^k (s-k\tau)^{\alpha k+\alpha-1}}{\Gamma(\alpha k+\alpha)}H(s-k\tau)\;ds\\
&\;\;\;\;+\int_{k_0\tau}^1 \sum_{0\leq k\tau\leq s}\frac{|\lambda|^k (s-k\tau)^{\alpha k+\alpha-1}}{\Gamma(\alpha k+\alpha)}H(s-k\tau)\;ds.
\end{align*}
Furthermore, for $0\leq i\leq k_0-1$, we see that
\begin{align*}
&\int_{i\tau}^{(i+1)\tau}\sum_{0\leq k\tau\leq s}\frac{|\lambda|^k (s-k\tau)^{\alpha k+\alpha-1}}{\Gamma(\alpha k+\alpha)}H(s-k\tau)\;ds\\
\hspace*{1.0cm}&=\sum_{k=0}^i \frac{|\lambda|^k}{\Gamma(\alpha k+\alpha)}\int_{i\tau}^{(i+1)\tau}(s-k\tau)^{\alpha k+\alpha-1}\;ds\\
\hspace*{1.0cm}&=\sum_{k=0}^i\frac{|\lambda|^{k}}{\Gamma(\alpha k+\alpha+1)}\left(((i+1-k)\tau)^{\alpha k+\alpha}-((i-k)\tau)^{\alpha k+\alpha}\right),
\end{align*}
and
\begin{align*}
&\int_{k_0 \tau}^1 \sum_{0\leq k\tau<s}\frac{|\lambda|^k (s-k\tau)^{\alpha k+\alpha-1}}{\Gamma(\alpha k+\alpha)}H(s-k\tau)\;ds\\
&=\sum_{k=0}^{k_0}\int_{k_0 \tau}^1 \frac{|\lambda|^k (\tau-k\tau)^{\alpha k+\alpha-1}}{\Gamma(\alpha k+\alpha)}\;ds\\
&=\sum_{k=0}^{k_0}\frac{|\lambda|^k}{\Gamma(\alpha k+\alpha+1)}\left((1-k\tau)^{\alpha k+\alpha}-((k_0-k)\tau)^{\alpha k+\alpha}\right),
\end{align*}
which imply that
\[
\int_0^1|E^{\lambda,\tau}_{\alpha,\alpha}(s)|\;ds
\]
is bounded. Now, for $t>1$, we use the representation $$\int_0^t |E^{\lambda,\tau}_{\alpha,\alpha}(s)|\;ds=\int_0^1 |E^{\lambda,\tau}_{\alpha,\alpha}(s)|\;ds
+\int_1^t |E^{\lambda,\tau}_{\alpha,\alpha}(s)|\;ds.$$
From (i), there exists a positive constant $\hat{C}$ such that
\begin{align*}
\int_1^t |E^{\lambda,\tau}_{\alpha,\alpha}(s)|\;ds&\leq \hat{C}\int_1^t \frac{1}{s^{\alpha+1}}\;ds\\
&\leq \frac{\hat{C}}{\alpha}. 
\end{align*}
Put $C_{\alpha,\lambda}:=\int_0^1 |E^{\lambda,\tau}_{\alpha,\alpha}(s)|\;ds+\frac{\hat{C}}{\alpha}$. Then,
\[
\sup_{t\geq 0}\int_0^t |E^{\lambda,\tau}_{\alpha,\alpha}(s)|\;ds\leq C_{\alpha,\lambda}.
\]
The proof is complete.
\end{proof}

\end{document}